\newtheorem{thm}{Theorem}
\newtheorem{prop}[thm]{Proposition}
\newtheorem{cor}[thm]{Corollary}
\newtheorem{lemma}[thm]{Lemma}
\theoremstyle{definition}
\newtheorem{remark}[thm]{Remark}
\newcommand{\N}{\mathbb{N}}
\newcommand{\Z}{\mathbb{Z}}
\newcommand{\R}{\mathbb{R}}
\newcommand{\C}{\mathbb{C}}
\newcommand{\bigoh}{\operatorname{O}}
\newcommand{\Cinfty}{\ensuremath{C^{\infty}}}
\newcommand{\Ccinfty}{\ensuremath{C_c^{\infty}}}
\newcommand{\Dprime}{\ensuremath{C^{-\infty}}}
\newcommand{\Ltwo}{\ensuremath{L^2}}
\newcommand{\Tstar}{\ensuremath{T^*}}
\newcommand{\setof}{\,;\,}
\newcommand{\supp}{\operatorname{supp}}
\newcommand{\WF}{\operatorname{WF}}
\newcommand{\restrict}[1]{|_{#1}}
\newcommand{\Hom}{\operatorname{Hom}}
\newcommand{\I}{i} 
\newcommand{\Iinv}{i^{-1}} 
\newcommand{\clie}[1]{\mathfrak{#1}_{\C}}
\newcommand{\lie}[1]{\mathfrak{#1}}
\newcommand{\tr}{\operatorname{tr}}
\newcommand{\Av}{\operatorname{Av}}
\newcommand{\Ad}{\operatorname{Ad}}
\newcommand{\Char}{\operatorname{Char}}
\newcommand{\ind}{\operatorname{ind}}
\newcommand{\afsupp}{\operatorname{afsupp}}
\newcommand{\AS}{\operatorname{AS}}
\newcommand{\Weylcc}{\bar{C}}
\newcommand{\Khat}{\hat{K}}
\newcommand{\Mhat}{\hat{M}}
\newcommand{\ME}[1]{M_{#1}}
\begin{document}

\title[Asymptotic $K$-Support]{Asymptotic $K$-Support and Restrictions of Representations}
\author[S. Hansen]{S\"onke Hansen}
\author[J. Hilgert]{Joachim Hilgert}
\author[S. Keliny]{Sameh Keliny}
\address{Institut f\"ur Mathematik\\ Universit\"at Paderborn\\ 33098 Paderborn\\ Germany}
\subjclass[2000]{Primary: 22E46; Secondary: 46F10}


\maketitle

\section{Introduction}

In the late nineties T.~Kobayashi wrote a series of papers in which he established
a criterion for the discrete decomposablity of  restrictions of unitary representations
of reductive Lie groups to reductive subgroups.
A key tool in the proof of sufficiency of his criterion was the use of the theory of
hyperfunctions to study the microlocal behavior of characters of restrictions to
compact subgroups. See \cite{kobayashi:98:microlocal}.
In this paper we show how to replace this tool by microlocal analysis in the $\Cinfty$ category.

In the following $K$ denotes a connected, compact Lie group with Lie algebra $\lie{k}$.
We fix a maximal torus with Lie algebra $\lie{t}\subset\lie{k}$ and an associated positive system.
By $\Weylcc\subset\I\lie{t}^*$, $\I=\sqrt{-1}$, we denote the closure of the (dual) Weyl chamber.
We identify equivalence classes of irreducible representations with their highest weights.
Thus we write $\Khat=\Lambda\cap\Weylcc$, where $\Lambda$ denotes the weight lattice in $\I\lie{t}^*$.
We also assume an $\Ad$-invariant inner product on $\lie{k}$,
extended to an $\Ad$-invariant  hermitian inner product on the complexification $\clie{k}$.
We denote the norm of $\lambda\in\clie{k}$ by $|\lambda|$.
Using the inner products we identify $\lie{t}^*$ and $\clie{t}^*$
with subsets of $\lie{k}^*$ and of $\clie{k}^*$, respectively.

The Fourier series $u=\sum_{\lambda\in\Khat} u_\lambda$
of any square integrable function $u$ converges in $\Ltwo(K)$.
A (formal) Fourier series $\sum_{\lambda\in\Khat} u_\lambda$ converges to a distribution $u\in\Dprime(K)$
iff the $\Ltwo$ norms $\|u_\lambda\|$ of the Fourier coefficients are polynomially bounded
as functions of $\lambda\in\Khat$.
Smooth functions, $u\in\Cinfty(K)$, are characterized by the rapid decrease of their Fourier coefficients,
$\|u_\lambda\|=\bigoh(|\lambda|^{-\infty})$ as $\lambda\to\infty$.
We shall define, for every distribution $u$, a closed cone $\afsupp(u)\subset\Weylcc\setminus 0$,
the asymptotic Fourier support of $u$.
Essentially this is the smallest cone outside which the Fourier coefficients decrease rapidly.
The asymptotic Fourier support is empty for $\Cinfty$ functions.

The wavefront set is a fundamental notion in the microlocal analysis of distributions.
Given a closed cone $\Gamma\subset \Tstar K\setminus 0$
one defines the space $\Dprime_\Gamma(K)$ which consists of all $u\in\Dprime(K)$
having their wavefront sets contained in $\Gamma$, $\WF u\subset\Gamma$.
Under appropriate geometric conditions on $\Gamma$ some operations
can be extended by continuity to $\Dprime_\Gamma(K)$.
The wavefront set was used by Howe~\cite{howe:81:wf} in a related setting.

The group $K\times K$ acts on the cotangent bundle $\Tstar K$ via left and right translations.

\begin{thm}
\label{thm-wf-afsupp}
Let $u\in\Dprime(K)$.
Then
\begin{equation}
\label{eq-wf-afsupp}
(K\times K)\cdot\WF(u)= (K\times K)\cdot \Iinv\afsupp(u).
\end{equation}
The Fourier series of $u$ converges in $\Dprime_{(K\times K)\cdot\Iinv\afsupp(u)}(K)$.
\end{thm}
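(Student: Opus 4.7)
I would prove \eqref{eq-wf-afsupp} from a uniform microlocal estimate on the individual Fourier components $u_\lambda$. Under the left trivialization $\Tstar K\cong K\times\lie{k}^*$, the $(K\times K)$-action becomes $(g,h)\cdot(k,\xi)=(gkh^{-1},\Ad^*(h)\xi)$, so the $(K\times K)$-orbit of $\Iinv\lambda\in\lie{t}^*\subset\Tstar_e K$ consists of all $(k,\xi)$ with $\xi$ on the coadjoint orbit through $\Iinv\lambda$. Thus \eqref{eq-wf-afsupp} reduces to the assertion that $(e,\xi)\in\WF(u)$ if and only if $\xi$ is $\Ad^*$-conjugate to $\Iinv\lambda$ for some $\lambda\in\afsupp(u)$.

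\textbf{Step 1 (matrix-coefficient microlocal estimate).} The main input is that for any $\chi\in\Ccinfty(K)$ supported in a small neighborhood of $e$, any $\lambda\in\Khat$, and any open conic neighborhood $\Gamma$ of the coadjoint orbit $\Ad^*(K)\Iinv\lambda$, the Euclidean Fourier transform of $\chi u_\lambda$ in an exponential chart satisfies
\begin{equation*}
\bigl|\widehat{\chi u_\lambda}(\eta)\bigr|\le C_{N}\,\|u_\lambda\|\,(1+|\lambda|)^{D}\,(1+|\eta|)^{-N}\qquad(\eta\notin\Gamma),
\end{equation*}
for every $N$, with $D$ independent of $\lambda$. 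My approach would be to combine the Kirillov/Rossmann character formula, which writes $\tr\pi_\lambda(\exp X)$ as an oscillatory integral over the coadjoint orbit, with integration by parts using $\Ad$-invariant vector fields, so that non-stationary phase eliminates the frequencies outside $\Gamma$.

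\textbf{Step 2 (both inclusions and convergence).} Granted Step 1, the inclusion $(K\times K)\cdot\WF(u)\subset(K\times K)\cdot\Iinv\afsupp(u)$ follows by splitting: if $\xi_0$ is not $\Ad^*$-conjugate to any $\Iinv\lambda$ with $\lambda\in\afsupp(u)$, choose a conic neighborhood $V\subset\Weylcc$ of the Weyl-chamber representative of $\I\xi_0$ with $V\cap\afsupp(u)=\emptyset$; then $\sum_{\lambda\in V}u_\lambda$ converges in $\Cinfty(K)$ by the defining decay of $\afsupp$, while Step 1 applied to each $u_\lambda$ with $\lambda\notin V$, combined with the polynomial growth of $\|u_\lambda\|$, shows that $\widehat{\chi u}$ is rapidly decreasing in a conic neighborhood of $\xi_0$. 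The reverse inclusion follows by assuming the full coadjoint orbit through $\Iinv\lambda_0$ lies outside $\WF(u)$, covering it by finitely many conic patches where $\widehat{\chi u}$ decays rapidly, and recovering $\|u_\lambda\|$ via the reproducing identity $u_\lambda=d_\lambda\,u\ast\overline{\chi_\lambda}$ together with Plancherel on $K$ to conclude $\|u_\lambda\|=\bigoh(|\lambda|^{-\infty})$ for $\lambda$ in a conic neighborhood of $\lambda_0$, contradicting $\lambda_0\in\afsupp(u)$. The same estimates give uniform H\"ormander-seminorm control of the partial sums $\sum_{|\lambda|\le N}u_\lambda$ outside $(K\times K)\cdot\Iinv\afsupp(u)$, yielding convergence in $\Dprime_{(K\times K)\cdot\Iinv\afsupp(u)}(K)$.

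\textbf{Main obstacle.} The crux is Step 1: establishing the microlocal decay of $\chi u_\lambda$ outside $\Ad^*(K)\Iinv\lambda$ with constants that are merely \emph{polynomial} in $|\lambda|$. The qualitative fact that matrix coefficients are Lagrangian distributions attached to coadjoint orbits is standard, but the uniform quantitative version across $\Khat$ requires a careful stationary-phase analysis, or equivalently an iteration using $\Ad$-invariant differential operators, with bookkeeping of the $\lambda$-dependence of all constants. Once Step 1 is in hand, the remaining splitting, summation, and $(K\times K)$-translation arguments in Step 2 are routine.
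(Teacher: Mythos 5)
Your overall architecture (reduce to a uniform microlocal estimate on the individual components $u_\lambda$, then split the sum) is reasonable and genuinely different from the paper's route, but the estimate you formulate in Step~1 is too weak to carry out Step~2, and this is the heart of the matter, not a bookkeeping detail. You claim
$|\widehat{\chi u_\lambda}(\eta)|\le C_N\|u_\lambda\|(1+|\lambda|)^D(1+|\eta|)^{-N}$ for $\eta\notin\Gamma$. For a \emph{fixed} $\eta$ this bound is summed over the infinitely many $\lambda\notin V$, and since $\|u_\lambda\|(1+|\lambda|)^{D}$ is only polynomially bounded, the series $\sum_{\lambda\notin V}C_N\|u_\lambda\|(1+|\lambda|)^{D}(1+|\eta|)^{-N}$ diverges; there is no decay in $\lambda$ to pay for the sum. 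What is actually needed is \emph{joint} rapid decay in $(|\lambda|,|\eta|)$ when the direction of $\eta$ is conically separated from $\Ad^*(K)\Iinv\lambda$, e.g.\ a bound of the form $C_N\|u_\lambda\|(1+|\lambda|+|\eta|)^{-N}$. This is exactly what the paper extracts: a linear functional $X\in\lie{t}$ strictly separating the two cones gives $|\langle\lambda-\I\eta,X\rangle|\ge c(|\lambda|+|\eta|)$, and each integration by parts against $\widetilde{X}$ then gains a factor of $(|\lambda|+|\eta|)^{-1}$, so that $2N$ steps yield $|\eta|^{-N}|\lambda|^{-N}$ simultaneously. Without this second-microlocal sharpening your splitting argument, the reverse inclusion, and the $\Dprime_\Gamma$-convergence claim all break down at the summation stage. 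The same issue affects your reverse inclusion, where you need $\|u_\lambda\|=\bigoh(|\lambda|^{-\infty})$ uniformly on a cone of $\lambda$'s from finitely many fixed local Fourier-decay estimates on $u$.

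A secondary gap: your Step~1 is formulated for arbitrary $u_\lambda\in\ME{\lambda}$ but your proposed proof (Kirillov/Rossmann formula plus stationary phase) only addresses the character $\chi_\lambda$; passing from $\chi_\lambda$ to $u_\lambda=d_\lambda\,u*\chi_\lambda$ with uniform constants requires a quantitative convolution calculus that you do not supply. The paper sidesteps both difficulties by a different decomposition: it first treats Fourier series of \emph{highest weight vectors}, which satisfy the exact transport equations $\widetilde{X}u_\lambda=\langle\lambda,X\rangle u_\lambda$ ($X\in\lie t$) and $\widetilde{X}u_\lambda=0$ ($X\in\lie n$) enabling the integration by parts above; it then obtains characters by averaging over conjugation (a pushforward--pullback argument controlling wavefront sets), and finally reaches general $u$ by convolving with the truncated Dirac series $\delta_S=\sum_{\lambda\in S\cap\Khat}d_\lambda\chi_\lambda$ and invoking the continuity of convolution on $\Dprime_\Gamma$ spaces. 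If you want to salvage your approach, you must (i) upgrade Step~1 to joint rapid decay in $(\lambda,\eta)$ with uniform constants over all $\lambda$ in a closed cone disjoint from the dual cone of $\eta$-directions, and (ii) either prove it for general matrix coefficients or give the quantitative reduction from $u_\lambda$ to $\chi_\lambda$.
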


Kashiwara and Vergne \cite[4.5]{kashiwara/vergne:79:ktypes} proved the first
assertion in the hyperfunction setting and noticed the $\Cinfty$ analogue in a remark.
The importance of the second assertion is that it implies for subgroups satisfying
geometric assumptions that restriction commutes with Fourier series.

A representation $\pi$ of $K$ in a Hilbert space is said to be polynomially bounded
if the $K$-multiplicity $m_K(\lambda:\pi)=\dim \Hom_K(\lambda,\pi)$ of $\lambda$ in $\pi$
is polynomially bounded as a function of $\lambda\in\Khat$.
In particular, the multiplicities are finite then.
The asymptotic $K$-support of $\pi$ is a closed cone $\AS_K(\pi)\subset\Weylcc\setminus 0$
with approximates the support of $m_K(\cdot:\pi)$ as $\lambda\to\infty$.
(See \cite[(2.7.1)]{kobayashi:98:microlocal}.)

\begin{thm}[\cite{kobayashi:98:microlocal}]
\label{thm-kobayashi}
Let $M$ be a closed subgroup of $K$.
Denote its Lie algebra by $\lie{m}$, and by $\lie{m}^\perp\subset\lie{k}^*$ the space of conormals.
Let $\pi$ be a unitary representation of $K$ which is polynomially bounded and which satisfies
\begin{equation}
\label{AS-disjoint-normalbundle}
\AS_K(\pi)\cap \I\Ad^*(K)\lie{m}^\perp =\emptyset.
\end{equation}
Then the restriction $\pi\restrict{M}$ of $\pi$ to $M$ is a polynomially bounded representation of $M$.
The asymptotic $M$-support
$\AS_M(\pi\restrict{M})$ is contained in the image of $\Ad^*(K)\AS_K(\pi)$ under
the canonical projection $\I\lie{k}^*\to\I\lie{m}^*$.
\end{thm}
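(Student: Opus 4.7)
The plan is to reduce the restriction problem to the pullback of a single distribution, then apply Theorem~\ref{thm-wf-afsupp} twice: once on $K$ to translate the hypothesis into a wavefront condition, and once on $M$ to read off the conclusion. To $\pi$ I associate the character distribution $u_\pi\in\Dprime(K)$ defined by $u_\pi(f) = \tr \pi(f)$ for $f\in\Cinfty(K)$. Polynomial boundedness of the multiplicities $m_K(\cdot:\pi)$ combined with rapid decay of the Peter--Weyl coefficients of $f$ forces $\pi(f)$ to be trace class, so $u_\pi$ is indeed a distribution. Its $\lambda$-th Fourier component is proportional to $m_K(\lambda:\pi)\chi_\lambda$, whose $\Ltwo$ norm is (up to a constant) $m_K(\lambda:\pi)$; consequently $\afsupp(u_\pi)\subset\AS_K(\pi)$ directly from the definition of $\AS_K(\pi)$.

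Applying Theorem~\ref{thm-wf-afsupp} to $u_\pi$ yields $\WF(u_\pi)\subset(K\times K)\cdot\Iinv\AS_K(\pi)$. Trivializing $\Tstar K\cong K\times\lie{k}^*$ by left-invariant forms, the conormal bundle of $M\hookrightarrow K$ becomes $M\times\lie{m}^\perp$, whose $(K\times K)$-saturation under left and right translations is $K\times\Ad^*(K)\lie{m}^\perp$. Hypothesis~\eqref{AS-disjoint-normalbundle} thus asserts exactly that $\WF(u_\pi)$ is disjoint from the conormal bundle of the embedding $M\hookrightarrow K$. Hence the pullback $v_\pi := u_\pi\restrict{M}\in\Dprime(M)$ is well-defined, and the standard wavefront bound for pullback along an embedding gives $\WF(v_\pi)$ contained in the image of $\WF(u_\pi)\restrict{M}$ under the fibrewise restriction $\Tstar K\restrict{M}\to\Tstar M$, which in left-invariant coordinates at the identity is dual to $\lie{m}\hookrightarrow\lie{k}$.

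It remains to translate properties of $v_\pi$ back to the representation side. Pairing $v_\pi$ with $\overline{\chi_\mu}$ reproduces $\tr\int_M\overline{\chi_\mu(m)}\pi(m)\,dm$, which up to the factor $d_\mu$ equals the trace of the $\mu$-isotypic projection and therefore equals $m_M(\mu:\pi\restrict{M})$. So the $\mu$-th Fourier coefficient of $v_\pi$ on $M$ has $\Ltwo$ norm proportional to $m_M(\mu:\pi\restrict{M})$, and the fact that $v_\pi\in\Dprime(M)$ forces polynomial boundedness of these multiplicities, giving the first conclusion. A second application of Theorem~\ref{thm-wf-afsupp}, now on $M$, converts the wavefront containment of the previous paragraph into $\AS_M(\pi\restrict{M})\subset$ image in $\I\lie{m}^*$ of $\Ad^*(K)\AS_K(\pi)$ under the canonical projection $\I\lie{k}^*\to\I\lie{m}^*$, which is the second conclusion.

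The main obstacle I anticipate is the bookkeeping in the middle step: matching the conormal bundle of $M\hookrightarrow K$ inside $\Tstar K$ with the $\Ad^*(K)$-saturation of $\lie{m}^\perp$ in fibre coordinates, so that \eqref{AS-disjoint-normalbundle} transfers cleanly to a wavefront disjointness statement. Once that geometric identification and the pullback bound on wavefront sets are in hand, Theorem~\ref{thm-wf-afsupp} applied in both directions does the remaining work with no further representation-theoretic input.
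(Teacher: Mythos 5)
Your overall strategy coincides with the paper's: form the character distribution $\Theta^K_\pi=\sum_\lambda m_K(\lambda:\pi)\tr\lambda$ (your $u_\pi$), use Theorem~\ref{thm-wf-afsupp} to bound its wavefront set by $K\times\Iinv\Ad^*(K)\AS_K(\pi)$, observe that hypothesis~\eqref{AS-disjoint-normalbundle} makes this disjoint from the conormal bundle $M\times\lie{m}^\perp$ so that the restriction to $M$ exists, and then apply Theorem~\ref{thm-wf-afsupp} again on $M$. Up to that point your argument is fine.

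The genuine gap is the step where you claim that pairing $v_\pi=u_\pi\restrict{M}$ with $\overline{\chi_\mu}$ ``reproduces $\tr\int_M\overline{\chi_\mu(m)}\pi(m)\,dm$'' and hence equals (a multiple of) $m_M(\mu:\pi\restrict{M})$. Two things go wrong here. First, $v_\pi$ is defined only through the abstract extension of the pullback to $\Dprime_\Gamma(K)$; the identity $u_\pi(f)=\tr\pi(f)$ is available for $f\in\Cinfty(K)$, but $\overline{\chi_\mu}\,dm$ is a measure supported on the submanifold $M$, not a smooth density on $K$, so there is no a priori formula expressing $(v_\pi\mid\chi_\mu)_{\Ltwo(M)}$ as an operator trace. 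The correct route is the one the introduction flags as the point of the \emph{second} assertion of Theorem~\ref{thm-wf-afsupp}: the Fourier series of $\Theta^K_\pi$ converges in $\Dprime_\Gamma(K)$, restriction is sequentially continuous there, hence restriction commutes with the Fourier series and $\Theta^K_\pi\restrict{M}=\sum_\lambda m_K(\lambda:\pi)\tr\lambda\restrict{M}$ in $\Dprime(M)$. You never invoke this continuity statement, yet without it you cannot compute the Fourier coefficients of $v_\pi$. Second, even granting the termwise formula, the identification $(\Theta^K_\pi\restrict{M}\mid\tr\mu)=\sum_\lambda m_K(\lambda:\pi)\,m_M(\mu:\lambda\restrict{M})=m_M(\mu:\pi\restrict{M})$ involves an a priori infinite branching sum, and your appeal to ``the trace of the $\mu$-isotypic projection'' presupposes exactly the finiteness you are trying to prove. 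The paper closes this by a separate representation-theoretic argument you omit entirely: for $\rho=\ind_M^K(\mu)$ one shows, via the differential equations $\widetilde{Y}f+\mu_*(Y)f=0$ and (a vector-valued version of) Theorem~\ref{thm-wf-afsupp}, that $\AS_K(\rho)\subset\I\Ad^*(K)\lie{m}^\perp$, whence $\AS_K(\rho)\cap\AS_K(\pi)=\emptyset$ and $\supp_K(\rho)\cap\supp_K(\pi)$ is finite; Frobenius reciprocity then reduces everything to finite sums. Without some substitute for this step, both conclusions of the theorem (polynomial boundedness of $\pi\restrict{M}$ and the bound on $\AS_M(\pi\restrict{M})$) remain unproved, since both rest on knowing that the Fourier coefficients of $\Theta^K_\pi\restrict{M}$ really are the multiplicities $m_M(\mu:\pi\restrict{M})$.
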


It is known that the restriction of an irreducible unitary representation $\pi$ of a real
reductive Lie group $G$ to a maximal compact subgroup is polynomially bounded.
For closed subgroups $G'\subset G$ which are stable under the Cartan involution a criterion
on $G'$-admissability of $\pi\restrict{G'}$ is given in \cite[Theorem~2.9]{kobayashi:98:microlocal}.
Theorem~\ref{thm-kobayashi} contains all the micro-local information needed to rewrite the proof
of \cite[Theorem~2.9]{kobayashi:98:microlocal} without having to invoke the theory of hyperfunctions.
Thus we offer an alternative approach to Kobayashi's theorem
for readers without a strong background in hyperfunction theory.

The proof of Theorem~\ref{thm-kobayashi} is centered around the notion of the $K$-character
$\sum_{\lambda\in\Khat} m_K(\lambda:\pi) \tr\lambda$ of $\pi$.
The asumptions imply that this series converges in $\Dprime(K)$,
and that the $K$-character posesses a restriction to $M$ which turns out
to be the $M$-character of $\pi\restrict{M}$.
Theorem~\ref{thm-wf-afsupp} is used to prove this.
The continuity statement given in Theorem~\ref{thm-wf-afsupp}
simplifies the proof Theorem~\ref{thm-kobayashi} when compared with the original argument.

The paper is organized as follows.
In Section~\ref{s-afsupp} we recall the expansion in eigenfunctions of a positive elliptic
operator and its application to Fourier series on $K$.
The asymptotic Fourier support is defined in this section.
In Section~\ref{s-wfcd} we study, for central distributions,
wavefronts sets and the convergence of Fourier series.
The theorems are proved in Sections~\ref{s-thm1} and \ref{s-thm2}.

This research grew out of the dissertation of the third author.
The work was supported by the DFG via the international research training
group ``Geometry and Analysis of Symmetries''.

\section{Asymptotic Fourier support}
\label{s-afsupp}

The space $\Dprime(K)$ of distributions on $K$ is, by definition, the dual space of $\Cinfty(K)$.
Functions are identified with distributions, $\Ltwo(K)\subset\Dprime(K)$,
using the normalized Haar measure $dk$ on $K$.
The $\Ltwo$ scalar product $(\cdot|\cdot)$ extends to an anti-duality between $\Dprime(K)$ and $\Cinfty(K)$.
We recall how the theory of Fourier series of distributions and of smooth functions
follows from results on eigenfunction expansions of elliptic selfadjoint differential operators.

The Sobolev space $H^m(K)$ consists of all distributions which are mapped into
$\Ltwo(K)$ by differential operators with order $\leq m$.
We assume differential operators to be linear with $\Cinfty$ coefficients.
$H^m(K)$ is equipped with a norm making it a Banach space.
Let $A$ be a second order, elliptic differential operator.
Regard $A$ as an unbounded operator on $\Ltwo(K)$ with domain $D(A)=H^2(K)$.
Its Hilbert space adjoint $A^*$ has, by elliptic regularity theory, the domain $D(A^*)=H^2(K)$.
Assume, in addition, that $(Au|u)>0$ if $0\neq u\in D(A)$.
Then $A$ is positive selfadjoint.
The eigenfunctions of $A$ are in $\Cinfty(K)$.

\begin{prop}[{\cite[\S 10]{seeley:65:integrodiff}}]
\label{seeleyThm}
Let $A$ be a positive selfadjoint second order elliptic differential operator on $K$.
Let $Au=\sum_j \mu_j^2(u|e_j)e_j$ denote its spectral resolution where $(e_j)\subset\Ltwo(K)$
is an orthonormal basis of eigenfunctions and $0<\mu_j\uparrow\infty$ the corresponding
sequence of eigenvalues of $\sqrt{A}$.
A series $\sum_j\alpha_j e_j$ converges in $\Cinfty(K)$ iff
$\sum_j \mu_j^{2N}|\alpha_j|^2<\infty$ for all $N\in\N$.
It converges in $\Dprime(K)$ iff $\sum_j \mu_j^{-2N}|\alpha_j|^2<\infty$ for some $N\in\N$.
The coefficients are $\alpha_j=(u|e_j)$ if $u\in\Dprime(K)$ denotes the sum of the series.
\end{prop}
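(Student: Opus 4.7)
The plan is to identify, for each integer $N$, the Hilbert space
\[
\mathcal{H}_N = \Bigl\{ u \in \Dprime(K) \setof \|u\|_{(N)}^2 := \sum_j (1+\mu_j^2)^N |(u|e_j)|^2 < \infty \Bigr\}
\]
with the Sobolev space $H^N(K)$ (equivalent norms), and then read off both convergence criteria from the standard topological identifications $\Cinfty(K) = \bigcap_N H^N(K)$ and $\Dprime(K) = \bigcup_N H^{-N}(K)$.

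For even $N = 2k \geq 0$, the identification $\mathcal{H}_{2k} = H^{2k}(K)$ rests on two facts. Since $A^k$ is a differential operator of order $2k$, it is bounded $H^{2k}(K) \to \Ltwo(K)$, so the spectral identity $\|A^k u\|^2 = \sum_j \mu_j^{4k}|(u|e_j)|^2$ together with $\|u\|^2 = \sum_j |(u|e_j)|^2 \leq \|u\|_{H^{2k}}^2$ gives $H^{2k}(K) \subset \mathcal{H}_{2k}$. Conversely, $A^k$ is positive and elliptic of order $2k$, so elliptic regularity yields an a priori estimate $\|u\|_{H^{2k}} \leq C(\|A^k u\| + \|u\|)$, which via the same spectral identity gives $\mathcal{H}_{2k} \subset H^{2k}(K)$. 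Interpolation fills in the remaining positive integer orders, and duality with respect to the $\Ltwo$-pairing extends the identification to negative $N$.

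With this spectral description of Sobolev spaces in place, the remaining arguments are formal. Orthonormality of $(e_j)$ makes $\mathcal{H}_N$ isometric to a weighted $\ell^2$ sequence space, so the partial sums of $\sum_j \alpha_j e_j$ converge in $H^N(K) = \mathcal{H}_N$ if and only if $\sum_j \mu_j^{2N}|\alpha_j|^2 < \infty$ (the weight $(1+\mu_j^2)^N$ may be replaced by $\mu_j^{2N}$ because only finitely many $\mu_j$ are small), and in that case pairing the limit against $e_k \in \Cinfty(K)$ in the anti-duality gives $\alpha_k = (u|e_k)$. A series converges in $\Cinfty(K)$ iff it converges in every $H^N(K)$ (using Sobolev embedding $H^N(K) \hookrightarrow C^m(K)$ for $N$ large), yielding the first assertion; a series converges in $\Dprime(K)$ iff it converges in some $H^{-N}(K)$, yielding the second.

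The main technical obstacle is the elliptic regularity estimate for the powers $A^k$ — equivalently, the identification of the $\Ltwo$-operator domain of $A^k$ with $H^{2k}(K)$. This is exactly the content of the cited section of Seeley's paper, where it is proved via a pseudodifferential parametrix for $A^k$. Once this spectral characterization of the Sobolev scale is granted, everything else in the proof is bookkeeping inside the spectral decomposition of $A$.
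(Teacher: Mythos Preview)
Your proof is correct and follows essentially the same route as the paper's: identify the spectral domains $D(A^k)$ with the Sobolev spaces $H^{2k}(K)$ via elliptic regularity, use the Sobolev lemma to recognize $\Cinfty(K)$ as the projective limit, and pass to $\Dprime(K)$ by duality of weighted $\ell^2$ spaces. The only cosmetic differences are that the paper deduces the topological equivalence $D(A^k)=H^{2k}(K)$ from Banach's open mapping theorem rather than from the explicit a~priori estimate you quote, and the paper works only with even orders $2k$ (so your interpolation step for odd $N$ is harmless but unnecessary).
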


\begin{proof}
The domain $D(A^k)$ of the $k$-th power of $A$ consists of all $u$ such that
$\sum_{j} \mu_j^{2k} |(u|e_j)|^2<\infty$.
We equip $D(A^k)$ with the corresponding norm.
The norm is equivalent with the graph norm.
Hence $D(A^k)$ is a Banach space.
Obviously, $H^{2k}(K)\subset D(A^k)$.
By elliptic regularity we have equality, $D(A^k)=H^{2k}(K)$.
This holds also topologically because of Banach's theorem.
By the Sobolev lemma, $\Cinfty(K)=\cap_{k} H^{2k}(K)$ as a projective limit.
Hence the norms on $D(A^k)$ define the Fr\'echet space topology of $\Cinfty(K)$.
The asserted convergence criterion for $\Cinfty(K)$ follows from this.
Using duality between weighted $\ell^2$ sequence spaces
we obtain the convergence criterion for $\Dprime(K)$.
Finally, the formula for the coefficients follows from
the (separate) continuity of the anti-duality bracket.
\end{proof}

The $\ell^2$ estimates in Proposition~\ref{seeleyThm}
can be replaced by supremum estimates because
$\sum_j \mu_j^{-k}<\infty$ for some $k\in\N$.
The latter property holds because $A^{-k/2}$ is of trace class if $k>\dim K$.

Denote by $d_\lambda$, $\chi_\lambda=\tr\lambda$, and $\ME{\lambda}\subset\Ltwo(K)$
the dimension, the character and the space of matrix coefficients of $\lambda\in\Khat$.
The convolution with $d_\lambda\chi_\lambda$ is the orthoprojector from $\Ltwo(K)$ onto $\ME{\lambda}$.
If $u\in\Ltwo(K)$, then its Fourier series $\sum_{\lambda\in\Khat} u_\lambda$,
$u_\lambda=d_\lambda u*\chi_\lambda$, converges to $u$ in $\Ltwo(K)$ by the Peter-Weyl theorem.
The (formal) Fourier series of a distribution $u\in\Dprime(K)$ is defined by the same formula
using the convolution of a distribution with a $\Cinfty$ function,
i.e., $(u*\psi)(x)=\int_K u(y)\psi(y^{-1}x)\,dy$ for $\psi\in\Cinfty(K)$ with
the integral representing the duality bracket.
Observe that $\chi_\lambda, u*\chi_\lambda\in\ME{\lambda}\subset\Cinfty(K)$.
In general, we call a series $\sum_{\lambda\in\Khat} u_\lambda$ with $u_\lambda\in\ME{\lambda}$
a Fourier series with coefficients $u_\lambda$.

We use left translation, $L_x(k)= xk$, to trivialize
the tangent bundle $TK=K\times\lie{k}$ and the cotangent bundle $\Tstar K=K\times\lie{k}^*$.
Under this identication left translation is the identity on the second components.
Right translation $R_x(y)=yx$ acts, on the second components, as the adjoint action,
$dR_{x^{-1}}:X\mapsto\Ad(x)X$, and as the co-adjoint action, ${}^t dR_{x^{-1}}:\xi\mapsto\Ad^*(x)\xi$.
Bi-invariant subsets of $\Tstar K$ are of the form $K\times \Ad^*(K)S$ for some $S\subset\lie{k}$.
Then formula~\eqref{eq-wf-afsupp} reads
\begin{equation*}
(K\times K)\cdot\WF(u)= K\times \Iinv\Ad^*(K)\afsupp(u).
\end{equation*}

Elements $X\in U(\clie{k})$ of the universal enveloping algebra act as left invariant
differential operators $\widetilde{X}$ on $\Dprime(K)$.
The principal symbol of the first order differential operator $\widetilde{X}$
associated with $X\in\clie{k}$ is $\sigma_1(\widetilde{X})(x,\xi)=\langle\xi,X\rangle$.
Denote the $\Ad$-invariant hermitian inner product on $\clie{k}$ by $Q$.
We assume that $Q$ equals the negative Killing form on $[\lie{k},\lie{k}]$
and that the center of the Lie algebra is orthogonal to $[\lie{k},\lie{k}]$.
Choose, consistent with this orthogonal decomposition, an orthonormal basis $\{X_j\}$ of $\lie{k}$.
Define the second order differential operator $A=1-\sum_j \widetilde{X}_j^2$.
The principal symbol of $A$ is $\sigma_2(A)(x,\xi)=Q^*(\xi)$, where $Q^*$ is the dual form of $Q$.
Hence $A$ is elliptic.
It follows from the left invariance of $\widetilde{X}$  and the invariance
of Haar measure that $\int_K\widetilde{X}v(y)\,dy=0$ for all $v\in\Cinfty(K)$, $X\in\clie{k}$.
Therefore, $A$ is positive selfadjoint with domain $H^2(K)$.
Furthermore, $A$ is bi-invariant.
Therefore, each $\ME{\lambda}$, $\lambda\in\Khat$, is contained in an eigenspace of $A$
with eigenvalue $\mu=\mu(\lambda)$.
There exists a constant $C>0$ such that
\begin{equation*}
1+|\lambda+\rho|^2-|\rho|^2\leq \mu\leq C(1+|\lambda|)^2
\quad\text{for all $\lambda\in\Khat$.}
\end{equation*}
Here $\rho$ is the half sum of positive roots.
The left inequality holds because $A-1$ is the sum of a non-negative operator $B$ and the Casimir operator.
It is well-known that the Casimir operator contains $\ME{\lambda}$ in its eigenspace
with eigenvalue $|\lambda+\rho|^2-|\rho|^2$.
Since $B$ is a sum of $-\widetilde{X}^2$, $X\in\lie{t}$, the right inequality follows from
$(-\widetilde{X}^2u|u)=\|\widetilde{X}u\|^2=\|\langle\lambda,X\rangle u\|^2$
which holds for any highest weight vector $u\in\ME{\lambda}$.

Summarizing we have the following.

\begin{cor}
\label{conv-F-series}
A Fourier series $\sum_{\lambda\in\Khat} u_\lambda$ converges
in $\Cinfty(K)$, resp.~in $\Dprime(K)$, iff
\begin{equation*}
\sum_{\lambda\in\Khat} (1+|\lambda|)^{2N} \|u_\lambda\|^2 <\infty
\end{equation*}
for all, resp.~for some, $N\in\Z$.
If $u\in\Dprime(K)$, then its Fourier series $\sum_{\lambda\in\Khat}u_\lambda$,
$u_\lambda=d_\lambda u*\chi_\lambda$, converges to $u$ in $\Dprime(K)$.
\end{cor}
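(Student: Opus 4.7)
The plan is to deduce both assertions from Proposition~\ref{seeleyThm} applied to the specific operator $A=1-\sum_j\widetilde{X}_j^2$ constructed above. For each $\lambda\in\Khat$ fix an orthonormal basis $e_{\lambda,1},\ldots,e_{\lambda,d_\lambda^2}$ of $\ME{\lambda}$; by Peter--Weyl their union over $\lambda$ is an orthonormal basis of $\Ltwo(K)$, and, since $A$ is bi-invariant, each $e_{\lambda,k}$ is an eigenfunction of $\sqrt{A}$ with eigenvalue $\sqrt{\mu(\lambda)}$. Thus Seeley's abstract basis $(e_j)$ is realized by the doubly-indexed family $(e_{\lambda,k})$.

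A Fourier series now takes the form $\sum_{\lambda,k}\alpha_{\lambda,k}e_{\lambda,k}$ with $u_\lambda=\sum_k\alpha_{\lambda,k}e_{\lambda,k}$ and $\|u_\lambda\|^2=\sum_k|\alpha_{\lambda,k}|^2$, so Seeley's weighted $\ell^2$-criterion becomes
\begin{equation*}
\sum_j\mu_j^{2N}|\alpha_j|^2=\sum_{\lambda,k}\mu(\lambda)^N|\alpha_{\lambda,k}|^2=\sum_\lambda\mu(\lambda)^N\|u_\lambda\|^2.
\end{equation*}
The two-sided bound $1+|\lambda+\rho|^2-|\rho|^2\leq\mu(\lambda)\leq C(1+|\lambda|)^2$ established above, together with the elementary estimate $(1+|\lambda|)^2\leq 2(1+|\lambda|^2)\leq 2(1+|\lambda+\rho|^2-|\rho|^2)$ (which uses $\langle\lambda,\rho\rangle\geq 0$ on $\Weylcc$), shows that $\mu(\lambda)$ is comparable to $(1+|\lambda|)^2$; the stated convergence criteria in $\Cinfty(K)$ and $\Dprime(K)$ follow.

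For the second assertion I would first identify $u_\lambda=d_\lambda u*\chi_\lambda$ with the eigenspace projection $P_\lambda u:=\sum_k(u|e_{\lambda,k})e_{\lambda,k}$. Both define continuous linear maps $\Dprime(K)\to\ME{\lambda}$: the first because convolution with the smooth function $\chi_\lambda$ is continuous on $\Dprime(K)$, the second because each functional $u\mapsto(u|e_{\lambda,k})$ is continuous and $\ME{\lambda}$ is finite-dimensional. They agree on $\Ltwo(K)$ by the usual Peter--Weyl projection formula, and since $\Ltwo(K)$ is dense in $\Dprime(K)$ they coincide everywhere.

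To conclude convergence of $\sum_\lambda u_\lambda$ to $u$ in $\Dprime(K)$, it suffices by the first part to produce some $N$ with $\sum_\lambda(1+|\lambda|)^{-2N}\|u_\lambda\|^2<\infty$. Any $u\in\Dprime(K)$ lies in $H^{-2N}(K)$ for some $N$, so $A^{-N}u=v\in\Ltwo(K)$; the identity $(u|e_{\lambda,k})=\mu(\lambda)^N(v|e_{\lambda,k})$ combined with Parseval for $v$ supplies the required bound, and the limit must equal $u$ because both have the same components against the total system $(e_{\lambda,k})$. The step most likely to require care is the identification $u_\lambda=P_\lambda u$, which rests on verifying that both sides extend continuously from $\Ltwo(K)$ to $\Dprime(K)$ before density closes the matter.
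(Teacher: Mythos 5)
Your argument is correct and is essentially the paper's own: the corollary is stated as a summary of Proposition~\ref{seeleyThm} applied to the bi-invariant operator $A=1-\sum_j\widetilde{X}_j^2$, using that each $\ME{\lambda}$ lies in the $\mu(\lambda)$-eigenspace and that $\mu(\lambda)$ is comparable to $(1+|\lambda|)^2$ by the two-sided bound established in the text. Your additional details (realizing Seeley's basis by matrix coefficients, identifying $d_\lambda u*\chi_\lambda$ with the spectral projection by continuity and density, and the $H^{-2N}$ argument for the last assertion) correctly fill in what the paper leaves implicit.
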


Smoothness properties of a distribution correspond to decaying properties of its Fourier coefficients.
We define an approximating cone to the directions of those $\lambda\in\Khat\subset\Weylcc$
such that the Fourier coefficients $u_\lambda$ do not decay rapidly as $\lambda\to\infty$.
A subset of a (finite dimensional) real vector space $V$ (or of a vector bundle)
is called conic or a \emph{cone} iff it is invariant under multiplication with positive reals.

Let $u\in\Dprime(K)$ and $\sum_{\lambda\in \Khat} u_\lambda$ its Fourier series.
The \emph{asymptotic Fourier support} of $u$ is the closed cone
$\afsupp(u)\subset\Weylcc\setminus 0$ which is defined as follows.
A point $\mu\in\Weylcc\setminus 0$ is in the complement of $\afsupp(u)$
iff there is a conic neighbourhood $S\subset\Weylcc\setminus 0$ of $\mu$ such that
\begin{equation*}
\sum_{\lambda\in S\cap\Khat} |\lambda|^{2N} \|u_\lambda\|^2 <\infty,
\quad \forall N\in\N.
\end{equation*}
By Corollary~\ref{conv-F-series}, $u\in\Cinfty(K)$ iff $\afsupp(u)=\emptyset$.
More generally, if $S\subset\Weylcc\setminus 0$ is a closed cone which is disjoint from $\afsupp(u)$,
then the Fourier series $\sum_{\lambda\in S\cap\Khat}u_\lambda$ converges in $\Cinfty(K)$.

\begin{remark}
Instead of working with $\ell^2$-estimates we can work with supremum estimates such as
$\sup_{\lambda\in S\cap\Khat} |\lambda|^{N} \|u_\lambda\| <\infty$.
This follows from the observation made after the proof of Proposition~\ref{seeleyThm}.
\end{remark}

With a subset $S\subset V$ one associates the closed cone $S_\infty\subset V\setminus 0$ as follows.
A point is in the complement of $S_\infty$ if it has a conic neighbourhood
which intersects $S$ in a relatively compact set.
Equivalently, $v\in S_\infty$ iff there exist sequences $(v_j)\subset S$ and $\varepsilon_j\downarrow 0$
such that $\lim_j \varepsilon_j v_j =v$.
The cone $S_\infty$ approximates $S$ at infinity.

The $K$-support $\supp_K(\pi)$ of a representation $\pi$ of $K$ in a Hilbert space is the set
of all $\lambda\in \Khat\subset\Weylcc$ such that $\lambda$ occurs in $\pi$, i.e., $m_K(\lambda:\pi)>0$.
The set $\AS_K(\pi)=\supp_K(\pi)_\infty\subset\Weylcc\setminus 0$ is the \emph{asymptotic $K$-support} of $\pi$.

\section{Wavefront convergence of central Fourier series}
\label{s-wfcd}

The definition of the wavefront set of a distribution is based on the calculus of pseudodifferential operators.
We collect, in our context, some definitions and results, refering to
\cite[Section~2.5]{hormander:71:FIO-1}, \cite{duistermaat:73:fio},
and \cite[Section~18.1]{hormander:85:the-analysis-3} for details.

With every pseudodifferential operator $A\in\Psi^m(K)$ one associates its
set $\Char A\subset \Tstar K\setminus 0$ of characteristic points.
A point is non-characteristic if there is a symbol $b\in S^{-m}$ such that $ab-1\in S^{-1}$
in a conic neighbourhood of that point.
Here $a\in S^m(\Tstar K)$ is, modulo $S^{m-1}(\Tstar K)$, a principal symbol of $A$.
The operator is said to be elliptic at a non-characteristic point.
An operator $A:\Cinfty(K)\to\Dprime(K)$ is a pseudodifferential operator iff its
Schwartz kernel $K_A\in\Dprime(K\times K)$ is a conormal distribution respect to the diagonal.
More explicitly, $A\in\Psi^m(K)$ iff the singular support of $K_A$ is contained in the diagonal
and $K$ can be covered with open sets $U\subset K$
such that the kernel is given by an oscillatory integral
\begin{equation}
\label{A-in-local-coord}
K_A(y',y)=\int e^{\I\varphi(y',\eta)-\I\varphi(y,\eta)} a(y',y,\eta) \,d\eta,
\end{equation}
$y',y\in U$.
The phase function $\varphi\in\Cinfty(U\times\lie{k})$ is real-valued, linear in the second variable,
and nondegenerate, i.e., $\det\varphi_{y\eta}''\neq 0$.
The amplitude $a$ belongs to the symbol space $S^m(U\times U\times\lie{k}^*)$.
$A$ is elliptic at $\xi=\varphi_x'(x,\zeta)\in\Tstar_x K\setminus 0$, $x\in U$,
iff there is a neighbourhood $U_0\subset U$ of $x$, a conic neighbourhood $V$ of $\zeta$,
and $C>0$ such that
$|a(y,y,\eta)|\geq |\eta|^m/C$ for $y\in U_0$, $\eta\in V$, $|\eta|>C$.

Let $u\in\Dprime(K)$. The wavefront set $\WF u\subset \Tstar K\setminus 0$
equals $\cap \Char A$, where the intersection is taken over all
pseudodifferential operators $A$ which satisfy $Au\in\Cinfty(K)$.
Let $\Gamma\subset \Tstar K\setminus 0$ be a closed cone.
The space $\Dprime_\Gamma(K)$ of distributions on $K$ which have their
wavefront sets contained in $\Gamma$ is equipped with a locally convex topology.
It contains $\Cinfty(K)$ as a sequentially dense subspace.
Convergence of a sequence, $u_j\to u$ in $\Dprime_\Gamma(K)$, is equivalent to
$u_j\to u$ (weakly) in $\Dprime(K)$  and the existence, for every
$(x,\xi)\in(\Tstar K\setminus 0)\setminus\Gamma$,
of a pseudodifferential operator $A\in\Psi^m(K)$ such that
$(x,\xi)\not\in\Char A$, and $Au_j\to Au$ in $\Cinfty(K)$.
If $u_j\to u$ in $\Dprime_\Gamma(K)$, then $Au_j\to Au$ in $\Cinfty(K)$
for every $A\in\Psi^m(K)$ which satisfies $\WF(A)\cap\Gamma=\emptyset$.
Here $\WF(A)$ is the smallest conic subset of $\Tstar K\setminus 0$
such that $A$ is of order $-\infty$ in the complement.
(See the remark following Theorem 18.1.28 of \cite{hormander:85:the-analysis-3}.)

Let $K$ act on $\Cinfty(K)$ via the right regular representation, $R_x f(y)=f(yx)$.
The corresponding action of the Lie algebra $\clie{k}$ is by left invariant vector fields,
$dR_e(X)f=\widetilde{X}f$.

The following lemma should be compared with \cite[3.1]{kashiwara/vergne:79:ktypes}.

\begin{lemma}
\label{highestweight}
Let $\sum_{\lambda\in \Khat} u_\lambda$ be a Fourier series which converges in $\Dprime(K)$.
Assume that each $u_\lambda$ is a highest weight vector for the right regular representation
acting irreducibly on a subspace of $\ME{\lambda}$.
Let $S$ be a closed cone $\subset\Weylcc\setminus 0$.
Then $\sum_{\lambda\in S\cap\Khat} u_\lambda$ converges in $\Dprime_{K\times\Iinv S}(K)$.
\end{lemma}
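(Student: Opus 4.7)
My strategy is to establish convergence in $\Dprime(K)$ and then to exhibit, for each $(x_0,\xi_0) \in \Tstar K\setminus 0$ with $\xi_0 \notin \Iinv S$, a PDO elliptic at $(x_0,\xi_0)$ under which every partial sum transforms to a $\Cinfty(K)$-convergent sequence. The $\Dprime(K)$-convergence of the subseries follows at once from Corollary~\ref{conv-F-series}, since the polynomially weighted $\Ltwo$-summability of the Fourier coefficients of the whole series passes to any subset of indices. By left-invariance of $K\times\Iinv S$ in the left trivialization of $\Tstar K$, we may take $x_0=e$, and then two cases arise depending on whether $\xi_0$ lies in the torus dual $\lie t^*$.

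If $\xi_0 \notin \lie t^*$, the real-root-space decomposition of $\lie k$ identifies $\lie t^*$ inside $\lie k^*$ as the annihilator of the real span of the root spaces, hence of $\mathfrak{n}^+\subset\clie k$; choose $E\in\mathfrak{n}^+$ with $\langle\xi_0,E\rangle\neq 0$. Then $\tilde E$ is a first-order PDO elliptic at $(e,\xi_0)$, and since $u_\lambda$ is a highest weight vector for the right regular action, $\tilde E u_\lambda=0$ exactly, so $\tilde E$ annihilates every partial sum. If $\xi_0\in\lie t^*\setminus\Iinv S$, choose a symbol $\chi\in S^0(\lie t^*)$ that vanishes in a conic neighborhood of $\Iinv S$ and equals $1$ in a smaller conic neighborhood of $\xi_0$, and extend it to $\tilde\chi\in S^0(\lie k^*)$ by pullback along the orthogonal projection $\lie k^*\to\lie t^*$. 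Let $A$ be the left-invariant classical PDO on $K$ produced from the joint functional calculus of the commuting family of selfadjoint operators $\{\Iinv\tilde H_j\}_j$ (where $\{H_j\}$ is an orthonormal basis of $\lie t$) applied to $\tilde\chi$. Then $A$ has principal symbol $\tilde\chi$, is elliptic at $(e,\xi_0)$ since $\tilde\chi(\xi_0)=1$, and acts on each highest weight vector by the scalar $Au_\lambda=\tilde\chi(\Iinv\lambda) u_\lambda$, which vanishes for every $\lambda\in S$ of sufficiently large norm. Being left-invariant, $A$ commutes with the left regular representation and therefore preserves each $\ME\lambda$, so by Corollary~\ref{conv-F-series} the $\Cinfty(K)$-convergence of $\sum_{\lambda\in S\cap\Khat}Au_\lambda$ is automatic (the sum is in fact finite modulo $\Cinfty$).

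The main obstacle is to justify, with care, that the functional calculus of $\{\Iinv\tilde H_j\}_j$ applied to a classical symbol yields a bona fide pseudodifferential operator on $K$ with the asserted principal symbol and the asserted spectral action on highest weight vectors. A concrete route is to realize $A$ as right convolution with a tempered distribution supported on $T$ whose Fourier transform on the weight lattice agrees with the restriction of $\chi$; Schur orthogonality then computes the effect on matrix coefficients, and analysis of the oscillatory kernel in exponential coordinates near $e$ verifies that the resulting operator lies in $\Psi^0(K)$ with principal symbol $\tilde\chi$. Combining both cases with the already-established weak convergence yields the wavefront containment $\WF u\subset K\times\Iinv S$ as well as the $\Dprime_{K\times\Iinv S}(K)$-convergence of the partial sums asserted by the lemma.
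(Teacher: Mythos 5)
Your overall framework (weak convergence plus, for each $(x,\xi)$ with $\xi\notin\Iinv S$, a pseudodifferential operator elliptic there mapping the partial sums into a $\Cinfty$-convergent sequence) and your first case ($\xi\notin\lie{t}^*$, use $\widetilde{X}$ for $X\in\lie{n}$ with $\langle\xi,X\rangle\neq 0$, which annihilates every $u_\lambda$) coincide with the paper's proof. The second case, however, contains a genuine gap: the operator you propose does not exist as a pseudodifferential operator on $K$ when $\lie{t}\subsetneq\lie{k}$. Two ways to see this. First, the would-be principal symbol $\tilde\chi=\chi\circ\mathrm{proj}$ is not in $S^0(\lie{k}^*)$: its $\xi$-derivatives decay only in $|\mathrm{proj}\,\xi|$, and on the region where $\mathrm{proj}\,\xi$ stays bounded while $|\xi|\to\infty$ they do not decay at all, so the symbol estimates fail; in particular $\tilde\chi$ cannot be the principal symbol of an operator in $\Psi^0(K)$. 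Second, and more concretely, your own proposed realization --- right convolution with a distribution $v$ carried by the subgroup $T$ whose Fourier coefficients are $\chi(\Iinv\nu)$ --- has Schwartz kernel supported on $\{(x,y)\setof x^{-1}y\in T\}$, a submanifold of positive codimension strictly containing the diagonal. Since $v$ is not supported at $e$ (that would force $\chi\restrict{\Lambda}$ to be a polynomial), the kernel is singular along $\{x^{-1}y=t\}$ for $t\neq e$, so its singular support is not contained in the diagonal and the operator is not in $\Psi^m(K)$ for any $m$. Such "Fourier multipliers for the right $T$-action" are the subject of noncommutative microlocal analysis precisely because they are not standard pseudodifferential operators, and without an honest elliptic $A\in\Psi^0(K)$ with $Au\in\Cinfty$ you cannot conclude $(x,\xi)\notin\WF u$ from the definition used here.

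The paper circumvents this by building a genuine microlocal cutoff: a symbol $b\in S^0(\lie{k}^*)$ supported in a full conic neighbourhood $\Gamma$ of $\xi$ in $\lie{k}^*\setminus 0$ (not a pullback from $\lie{t}^*$), together with a phase $\varphi$ solving a transport equation for $\widetilde{X}$ with $X\in\lie{t}$ chosen to strictly separate $-\R_+\xi$ from $\I S$ (after reducing to convex $S$ by a finite covering). The eigenvalue equation $\widetilde{X}u_\lambda=\langle\lambda,X\rangle u_\lambda$ then allows repeated integration by parts, and the separation estimate $|\langle\lambda-\I\eta,X\rangle|>c(|\lambda|+|\eta|)$ for $\lambda\in S$, $\eta\in\Gamma$, yields the rapid decay of $\int_K e^{-\I\varphi(y,\eta)}\chi(y)u_\lambda(y)\,dy$ uniformly over $\Gamma$, which is what makes the resulting $A\in\Psi^0(K)$ both elliptic at $(x,\xi)$ and smoothing on the series. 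You would need to replace your functional-calculus step by an argument of this type (or by a substantially more elaborate analysis of the flowout operator you define) for the second case to go through.
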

\begin{proof}
The differential equations
$\widetilde{X}u_\lambda=0$ and $\widetilde{X}u_\lambda=\langle\lambda,X\rangle u_\lambda$
hold for $X\in\lie{n}$ and $X\in\lie{t}$, respectively.
Here $\lie{n}\subset\clie{k}$ denotes the sum of positive root spaces.

Let $(x,\xi)\in K\times\lie{k}^*\setminus 0$, $\xi\not\in\Iinv S$.
It suffices to find a pseudodifferential operator $A$, elliptic at $(x,\xi)$, such that
the series $\sum_{\lambda\in S\cap\Khat} Au_\lambda$ converges in $\Cinfty(K)$.
If $\xi\not\in \lie{t}^*$, then there exists $X\in \lie{n}$ with $\langle\xi,X\rangle\neq 0$;
the first order differential operator $A=\widetilde{X}$ has the desired properties.
Now assume $\xi\in \lie{t}^*$.
Then the cone $S-\R_+\I\xi$ is a closed subset of $\I\lie{t}^*\setminus 0$.
It follows by a simple compactness argument that $|\lambda|+|\xi|\leq C|\lambda-\I\xi|$
with a constant $C>0$ independent of $\lambda\in S$.
Assume that $S$ is convex.
Choose $X\in\lie{t}$ which strictly separates the disjoint convex cones $-\R_+\xi$ and $\I S$.
We infer that there exists $c>0$ such that
\begin{equation}
\label{etalambda}
\big|\langle\lambda -\I \eta,X\rangle\big|> c(|\lambda|+|\eta|)
\quad \text{for all $\lambda\in S$, $\eta\in\Gamma$,}
\end{equation}
where $\Gamma=\R_+\xi$.
By continuity \eqref{etalambda} also holds in a conic neighbourhood $\Gamma\subset\lie{k}^*\setminus 0$ of $\xi$.

Let $U\subset K$ be an open neighbourhood of $x$ and $H\subset U$ a hypersurface
containing $x$ such that the following holds.
The real vector field $\widetilde{X}$ is transversal to $H$ and every maximally extended
integral curve of $\widetilde{X}$ in $U$ hits $H$ in a unique point.
Furthermore,
$y\mapsto \exp^{-1}(x^{-1}y)$ maps $U$ diffeomorphically onto an open neighbourhood of the origin in $\lie{k}$.
Using the method of characteristics  we solve, for every $\eta\in\lie{k}^*$,
the initial value problem
\begin{equation*}
\widetilde{X}\varphi(\cdot,\eta)=\langle \eta,X\rangle \;\text{in $U$},
\quad \varphi(y,\eta)=\langle \eta, \exp^{-1}(x^{-1}y)\rangle\;\text{at $y\in H$.}
\end{equation*}
The solution $\varphi\in\Cinfty(U\times\lie{k}^*)$ is linear in the second variable
and $\varphi_x'(x,\eta)=\eta$ holds in $\Tstar_x K=\lie{k}^*$ for all $\eta$.
In particular, $\varphi_{y\eta}''$ is nondegenerate at $y=x$.
We have
$\widetilde{X}e^{-\I\varphi(\cdot,\eta)}u_\lambda
   =\langle\lambda -\I \eta,X\rangle e^{-\I\varphi(\cdot,\eta)}u_\lambda$.
Since $\widetilde{X}$ is left invariant, $\int_K\widetilde{X}v(y)\,dy=0$ holds for all $v\in\Cinfty(K)$.
Therefore we can perform partial integration as follows,
\begin{equation*}
\langle\lambda -\I \eta,X\rangle \int_K e^{-\I\varphi(y,\eta)}u_\lambda(y) \chi(y)\,dy
  = - \int_K e^{-\I\varphi(y,\eta)}u_\lambda(y) \widetilde{X}\chi(y)\,dy
\end{equation*}
if $\chi\in\Ccinfty(U)$.
Iterating $N$ times and estimating the integral on the right using the Cauchy-Schwarz inequality we obtain
\begin{equation*}
\big|\langle\lambda -\I \eta,X\rangle^N \int_K e^{-\I\varphi(y,\eta)}u_\lambda(y) \chi(y)\,dy\big|
   \leq C_N \|u_\lambda\|
\end{equation*}
with a constant $C_N>0$ independent of $\lambda\in S\cap\Khat$ and $\eta\in\Gamma$.
In view of \eqref{etalambda} we get
\begin{equation*}
\sup_{\eta\in\Gamma} |\eta|^{N}\big|\int_K e^{-\I\varphi(y,\eta)} \chi(y) u_\lambda(y) \,dy\big|
   \leq C_{2N}c^{-2N} |\lambda|^{-N} \|u_\lambda\|,
\end{equation*}
for all $\lambda\in S\cap\Khat$ and $N\in\N$.
Since the $\Ltwo$ norms of the Fourier coefficients are polynomially bounded we obtain,
for every $\chi\in\Ccinfty(U)$,
\begin{equation}
\label{hw-estimate-of-sum}
\sum_{\lambda\in S\cap\Khat}
 \sup_{\eta\in\Gamma} |\eta|^N \big|\int_K e^{-\I\varphi(y,\eta)} \chi(y) u_\lambda(y) \,dy\big| <\infty,
 \quad N\in\N.
\end{equation}
We can assume that, making $U$ and $\Gamma$ smaller if necessary,
$\det\varphi_{y\eta}''\neq 0$ in $U\times \lie{k}^*$,
and $\varphi_y'(U\times\Gamma)\cap\Iinv S=\emptyset$.
Fix $\chi\in\Ccinfty(U)$ with $\chi(x)=1$.
Choose a symbol $b\in S^0(\lie{k}^*)$ with $\supp b\subset\Gamma$
and $b=1$ in a conic neighbourhood of $\xi$ minus a compact set.
Define the pseudodifferential operator $A\in\Psi^0(K)$ with kernel $K_A$ supported in $U\times U$
and given by \eqref{A-in-local-coord} with amplitude $a(y',y,\eta)=\chi(y')b(\eta)\chi(y)$.
It follows from \eqref{hw-estimate-of-sum} that
$\sum_{\lambda\in S\cap\Khat} Au_\lambda$ converges in $\Cinfty(K)$.
Furthermore, $A$ is elliptic at $(x,\xi)$.
Hence we have proved the assertion under the additional assumption that $S$ is convex.
To remove this assumption observe that
$S$ can be covered by finitely many closed convex cones each not containing $\I\xi$.
Decompose the Fourier series correspondingly.
\end{proof}

Pullback and pushforward of distributions is well-defined and continuous under
assumptions on the wavefront sets.
With any $\Cinfty$ map $f:X\to Y$ map between smooth manifolds one associates its canonical relation
\begin{equation*}
C_f=\{(y,\eta;x,\xi)\setof y=f(x), \xi={}^t{f'(x)}\eta\}\subset \Tstar Y\times\Tstar X.
\end{equation*}
For a closed cone $\Gamma\subset\Tstar Y\setminus 0$ define its pullback
cone $f^*\Gamma=C_f^{-1}\circ\Gamma\subset\Tstar X$.
If $f^*\Gamma$ does not intersect the zero section,
then the pullback $f^*u=u\circ f$ extends from $\Cinfty(X)$ to a
(sequentially) continuous pullback operator $f^*:\Dprime_{\Gamma}(Y)\to \Dprime_{f^*\Gamma}(X)$.
If $f$ is a proper map, then the pushforward operator $f_*:\Dprime(X)\to\Dprime(Y)$ is defined by duality.
If, in addition, $f$ is a submersion and $\Gamma\subset\Tstar X\setminus 0$ is a closed
cone, then $f_*\Gamma:=C_f\circ\Gamma\subset\Tstar Y\setminus 0$ and
the pushforward restricts to a (sequentially) continuous map
$f_*:\Dprime_{\Gamma}(X)\to \Dprime_{f_*\Gamma}(Y)$.

An important example of a pullback operator is the restriction to a submanifold $M\subset K$.
It is defined on distributions having wavefront sets disjoint from the conormal bundle of $M$.
The pushforward by a projection $(x,y)\mapsto x$ is integration along fibers.

\begin{lemma}
\label{averaging}
Let $\Gamma\subset\Tstar K\setminus 0$ be a $K\times K$-invariant closed cone.
Taking the average $\Av f(x)=\int_K f(yxy^{-1})\,dy$ of a function $f$
extends uniquely from $\Cinfty(K)$ to an operator
$\Av:\Dprime_\Gamma(K)\to\Dprime_{\Gamma}(K)$.
\end{lemma}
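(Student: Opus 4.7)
The plan is to realize $\Av$ as the composition $\pi_* \circ \phi^*$, where $\phi: K \times K \to K$, $\phi(y,x) = yxy^{-1}$, and $\pi: K \times K \to K$, $\pi(y,x) = x$. For $f \in \Cinfty(K)$, Fubini gives $\Av f(x) = \int_K (\phi^* f)(y, x)\,dy = (\pi_* \phi^* f)(x)$, so it suffices to show that $\pi_* \circ \phi^*$ extends continuously to a map $\Dprime_\Gamma(K) \to \Dprime_\Gamma(K)$; uniqueness of the extension then follows from the sequential density of $\Cinfty(K)$ in $\Dprime_\Gamma(K)$ together with the sequential continuity of the composition.

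A direct computation in the left-invariant trivialization yields
\[
d\phi_{(y,x)}(Y,X) = \Ad(y)\bigl(\Ad(x^{-1})Y - Y + X\bigr), \qquad (Y, X) \in \lie{k} \oplus \lie{k},
\]
whose transpose carries $\zeta \in \lie{k}^*$ to $\bigl((\Ad^*(x^{-1}) - \Id)\Ad^*(y)\zeta,\; \Ad^*(y)\zeta\bigr)$. The second coordinate is nonzero whenever $\zeta \neq 0$, so $\phi^*\Gamma$ avoids the zero section of $\Tstar(K \times K)$, and $\phi^*: \Dprime_\Gamma(K) \to \Dprime_{\phi^*\Gamma}(K \times K)$ is sequentially continuous by the recalled pullback theorem. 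Since $\pi$ is a proper submersion, the pushforward is likewise continuous as a map $\Dprime_{\phi^*\Gamma}(K \times K) \to \Dprime_{\pi_*\phi^*\Gamma}(K)$.

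It remains to check $\pi_*\phi^*\Gamma \subset \Gamma$. The canonical relation $C_\pi = \{(x,\xi;\, y, x, 0, \xi)\}$ selects those $(y, x, \eta_y, \eta_x) \in \phi^*\Gamma$ with $\eta_y = 0$, i.e., those satisfying $(\Ad^*(x^{-1}) - \Id)\xi = 0$ for $\xi = \eta_x = \Ad^*(y)\zeta$ and some $(yxy^{-1}, \zeta) \in \Gamma$. The $(K \times K)$-invariance of $\Gamma$ forces $\Gamma = K \times S$ for a closed cone $S \subset \lie{k}^* \setminus 0$ that is $\Ad^*(K)$-stable; hence $\zeta \in S$ implies $\xi = \Ad^*(y)\zeta \in S$, so $(x,\xi) \in \Gamma$, as required.

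The main step requiring care is the bookkeeping under the left-invariant trivialization: computing $d\phi$ and its transpose correctly, and translating the bi-invariance of $\Gamma$ into $\Ad^*(K)$-invariance of its fiber $S$. Everything else is a direct application of the wavefront calculus for pullback and proper pushforward recalled above.
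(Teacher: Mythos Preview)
Your proof is correct and follows essentially the same route as the paper: both factor $\Av$ as a pullback by the conjugation map followed by pushforward along the projection, verify via the differential that the pullback cone misses the zero section, and then use the $\Ad^*(K)$-invariance of the fiber $S$ of $\Gamma=K\times S$ to conclude $\pi_*\phi^*\Gamma\subset\Gamma$. The only differences are cosmetic (your variable ordering $(y,x)$ versus the paper's $(x,y)$, and a sign in the $\Ad^*$ convention that is irrelevant since $S$ is invariant under the full coadjoint action).
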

\begin{proof}
Define $g:K\times K\to K$, $g(x,y)=yxy^{-1}$, and $p:K\times K\to K$, $p(x,y)=x$.
Then $\Av=p_* g^*$ on $\Cinfty(K)$.
By assumption $\Gamma=K\times S$ where $S\subset\lie{k}^*\setminus 0$ is an $\Ad^*$-invariant closed cone.
A computation shows that $((yxy^{-1},\zeta),(x,y,\xi,\eta))\in C_g$ iff
$\xi=\Ad^*(y^{-1}) \zeta$, and $\eta=\Ad^*(xy^{-1})\zeta-\Ad^*(y^{-1})\zeta$.
Clearly, $g^*\Gamma$ does not intersect the zero section.
Hence the pullback operator $g^*$ is defined.
Composing $C_g^{-1}$ with the relation $C_p$ leads to $\eta=0$ and
$p_* g^*\Gamma\subset K\times \Ad^*(K)T$.
The assertion follows from this.
\end{proof}

A distribution on $K$ is called central if it is invariant under conjugation.
The Fourier coefficients of central distributions are multiples of characters.

\begin{prop}[{\cite[4.5]{kashiwara/vergne:79:ktypes}}]
\label{wf-central}
Let $S$ be a closed cone $\subset\Weylcc\setminus 0$.
Let $u\in\Dprime(K)$ central with $(u|\chi_\lambda)=0$ if $\lambda\not\in S$.
Then the Fourier series of $u$ converges in $\Dprime_{K\times \Iinv \Ad^*(K)S}(K)$.
\end{prop}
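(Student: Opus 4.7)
My plan is to reduce the central case to the highest-weight case handled by Lemma \ref{highestweight}, using the conjugation averaging operator $\Av$ together with the wavefront-preserving property of Lemma \ref{averaging}. First observe that since $u$ and $\chi_\lambda$ are both central and convolution of central distributions on $K$ is central, each Fourier coefficient $u_\lambda = d_\lambda u*\chi_\lambda$ is a central element of $\ME{\lambda}$. The subspace of central functions in $\ME{\lambda}$ is the one-dimensional line spanned by $\chi_\lambda$, so $u_\lambda = c_\lambda \chi_\lambda$ for a unique scalar $c_\lambda$, and $c_\lambda = 0$ whenever $\lambda \notin S$. Because $\|\chi_\lambda\|_{\Ltwo} = 1$, we have $|c_\lambda| = \|u_\lambda\|$, polynomially bounded in $|\lambda|$ by Corollary \ref{conv-F-series}.

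Next I realize $\chi_\lambda$ as a conjugation average of a highest weight matrix coefficient. Fix a unit highest weight vector $\xi_\lambda$ in the representation space of $\lambda$ and set $h_\lambda(x) = \langle \pi_\lambda(x) \xi_\lambda, \xi_\lambda \rangle \in \ME{\lambda}$. Right translation acts on the first slot of matrix coefficients through $\pi_\lambda$, so $h_\lambda$ is a highest weight vector for the right regular representation on a single irreducible subspace of $\ME{\lambda}$. A direct Schur orthogonality computation gives $\Av(h_\lambda) = d_\lambda^{-1} \chi_\lambda$ together with $\|h_\lambda\|_{\Ltwo} = d_\lambda^{-1/2}$. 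Define $v_\lambda = c_\lambda d_\lambda h_\lambda$, so that $\Av(v_\lambda) = u_\lambda$ and $\|v_\lambda\| = d_\lambda^{1/2}\|u_\lambda\|$. The Weyl dimension formula makes $d_\lambda$ polynomial in $|\lambda|$, so these norms remain polynomially bounded, and by Corollary \ref{conv-F-series} the series $\sum_{\lambda \in S \cap \Khat} v_\lambda$ converges in $\Dprime(K)$. Lemma \ref{highestweight} then shows it converges in $\Dprime_{K\times \Iinv S}(K)$, hence a fortiori in $\Dprime_\Gamma(K)$ for the closed $K\times K$-invariant cone $\Gamma = K \times \Iinv \Ad^*(K)S$. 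By Lemma \ref{averaging} the operator $\Av$ extends continuously to $\Dprime_\Gamma(K)$, so applying it term by term yields convergence of $\sum u_\lambda = \sum \Av(v_\lambda)$ in $\Dprime_\Gamma(K)$; uniqueness of weak limits in $\Dprime(K)$ identifies the sum with $u$.

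The only nontrivial computation is the identity $\Av(h_\lambda) = d_\lambda^{-1}\chi_\lambda$, but this is essentially free: $\Av(h_\lambda)$ belongs to the one-dimensional central subspace of $\ME{\lambda}$ and therefore must be a scalar multiple of $\chi_\lambda$, leaving only the proportionality constant to be pinned down by a short Schur orthogonality calculation. The remaining bookkeeping, namely polynomial boundedness of $\|v_\lambda\|$ and the $K\times K$-invariance of $\Gamma$, is routine.
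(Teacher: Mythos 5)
Your proof is correct and follows essentially the same route as the paper: write each central coefficient as $c_\lambda\chi_\lambda$, realize $\chi_\lambda$ as $d_\lambda\Av$ of a highest-weight matrix coefficient, apply Lemma~\ref{highestweight} to the resulting highest-weight series, and finish with Lemma~\ref{averaging}. The only differences are cosmetic (you normalize $\|h_\lambda\|=d_\lambda^{-1/2}$ via Schur orthogonality, while the paper bounds $\|w_\lambda\|\le 1$ and pins the constant by evaluating at $e$).
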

\begin{proof}
For each $\lambda\in\Khat$ we choose a highest weight vector $w_\lambda\in\ME{\lambda}$
of a irreducible subrepresentation $\subset\ME{\lambda}$ of the right regular representation.
We may view $w_\lambda$ as a matrix coefficient of the form
$w_\lambda(x)=(R_x v| v)$ with $v\in\ME{\lambda}$, $\|v\|=1$.
Then $w_\lambda(e)=1$, and
$\|w_\lambda\|\leq \sup_K|w_\lambda|\leq 1$ by the Cauchy-Schwarz inequality.
The central function $\Av w_\lambda$ is a multiple of $\chi_\lambda$.
Comparing values at $e$ we get $\Av w_\lambda=d_{\lambda}^{-1}\chi_\lambda$.

The dimension $d_\lambda$ and, in view of Corollary~\ref{conv-F-series},
the Fourier coefficients $(u|\chi_\lambda)$ of $u$ grow at most polynomially in $\lambda$.
Hence $w=\sum_{\lambda\in S\cap\Khat} d_{\lambda}(u|\chi_\lambda)w_\lambda$ converges in $\Dprime(K)$.
By Lemma~\ref{highestweight} the series converges in $\Dprime_{K\times\Iinv S}(K)$.
The assertion follows from Lemma~\ref{averaging} since $u=\Av w$.
\end{proof}

\section{Proof of Theorem~\ref{thm-wf-afsupp}}
\label{s-thm1}

Every $v\in\Dprime(K)$ defines a convolution operator $\Cinfty(K)\to\Cinfty(K)$, $w\mapsto v*w$.
This is a continuous linear map which commutes with right translations.
Conversely, every such map is given by convolution with a unique element $v\in\Dprime(K)$.
Composition of maps defines the convolution $u*v\in\Dprime(K)$ of distributions $u,v\in\Dprime(K)$
by $(u*v)*w=u*(v*w)$, $w\in\Cinfty(K)$.
We have the formula $u*v = p_* f^*(u\otimes v)$, where $f$ is the diffeomorphism
$f:K\times K\to K\times K$, $f(x,y)=(y,y^{-1}x)$, and $p$ the projection $p:K\times K\to K$, $p(x,y)=x$.
The formula is evident for smooth functions and extends to distributions by separate sequential continuity.
The composition $C_p\circ C_f^{-1}$ of the canonical relations consists of all
\begin{equation*}
((x,\xi),(y,y^{-1}x,\Ad^*(x^{-1}y)\xi,\xi))\in \Tstar K\times \Tstar (K\times K).
\end{equation*}
The wavefront of a tensor product satisfies
\begin{equation*}
\WF(u\otimes v)\subset (\WF u\times \WF v)\cup (0\times\WF v)\cup (\WF u\times 0).
\end{equation*}
Moreover, as a bilinear map the tensor product satisfies corresponding separate continuity properties.
It follows that, for any two cones $S_1$ and $S_2$ in $\lie{k}^*\setminus 0$,
the convolution $(u_1,u_2)\mapsto u_1*u_2$ defines a separately sequentially continuous bilinear map
\begin{equation}
\label{WF-in-convolution}
*:\Dprime_{K\times S_1}(K)\times \Dprime_{K\times S_2}(K)\to \Dprime_{K\times (S_1\cap \Ad^*(K)S_2)}(K).
\end{equation}

Convolution with the Dirac distribution
$\delta=\sum_{\lambda\in \Khat} d_\lambda \chi_\lambda\in\Dprime(K)$
is the identity, $\delta*u=u$.
In the proof of the theorem we need
$\delta_S= \sum_{\lambda\in S\cap\Khat} d_\lambda \chi_\lambda\in\Dprime(K)$ where $S\subset\Weylcc\setminus 0$.
If $S$ is a closed cone, then it follows from Proposition~\ref{wf-central} that the series
also converges to $\delta_S$ in $\Dprime_{K\times \Iinv\Ad^*(K)S}(K)$.

Now, turning to the proof of the theorem, let $u\in\Dprime(K)$.
Assume that $S\subset\Weylcc\setminus 0$ is a closed cone which contains $\afsupp(u)$ in its interior.
Then the series of $\delta_{\Weylcc\setminus S} *u$ converges in $\Cinfty(K)$.
Using \eqref{WF-in-convolution} with $S_1=\Iinv \Ad^*(K)S$
we deduce from the above that the Fourier series of
$\delta_{S}*u$ converges in $\Dprime_{K\times \Iinv\Ad^*(K)S}(K)$.
It follows that the Fourier series of $u=\delta_{S}*u+\delta_{\Weylcc\setminus S}*u$
converges in this space, too.
In particular, we have
$(K\times K)\cdot\WF(u)\subset K\times \Iinv\Ad^*(K)S$.
This implies that the left-hand side in \eqref{eq-wf-afsupp} is contained in the right-hand side.

To prove the opposite inclusion let $S$ a closed cone
$\subset\Weylcc$ such that $\WF(u)\cap
(K\times\Iinv\Ad^*(K)S)=\emptyset$. We apply
\eqref{WF-in-convolution} to $\delta_S*u$ and deduce that the
Fourier series $\sum_{\lambda\in S\cap\Khat}u_\lambda$ converges in
$\Cinfty(K)$. This implies that $S$ is disjoint from the asymptotic
Fourier support of $u$. Since the closure of a Weyl chamber is a
fundamental domain for the coadjoint action on $\lie{k}^*$, this
implies $K\times \Iinv\afsupp(u)\subset (K\times K)\cdot \WF u$.

\section{Proof of Theorem~\ref{thm-kobayashi}}
\label{s-thm2}

The polynomial boundedness of $\pi$ implies the
finiteness of the multiplicities $m_K(\lambda:\pi)$ and the convergence of its $K$-character
\begin{equation} \label{def-K-char}
\Theta^K_\pi:= \sum_{\lambda\in\Khat} m_K(\lambda:\pi) \tr\lambda
\quad\text{in $\Dprime(K)$.}
\end{equation}
The support of $\Theta^K_\pi$ equals $\supp_K(\pi)$.
We have
\begin{equation*}
\AS_K(\pi)=\supp_K(\pi)_\infty=\afsupp(\Theta^K_\pi).
\end{equation*}
The second equality holds because
the $\Ltwo$-norm of each non-zero summand in \eqref{def-K-char} is $\geq 1$.
>From Theorem~\ref{thm-wf-afsupp} it follows that \eqref{def-K-char} converges
in $\Dprime_{\Gamma}(K)$ where $\Gamma=K\times \Iinv\Ad^*(K)\AS_K(\pi)$.
Assumption~\eqref{AS-disjoint-normalbundle} implies that
the conormal bundle of $M$, which is a subset of $K\times \lie{m}^\perp$,
is disjoint from $\Gamma$.
Hence the restriction
\begin{equation} \label{K-char-to-M}
\Theta^K_\pi\restrict{M} = \sum_{\lambda\in\Khat} m_K(\lambda:\pi) \tr\lambda\restrict{M}
\quad\text{converges in $\Dprime(M)$,}
\end{equation}
and $\WF(\Theta^K_\pi\restrict{M})$ is contained in $M\times \Iinv A\subset\Tstar M\setminus 0$,
where $A\subset\I\lie{m}$ denotes the image of $\Ad^*(K)\AS_K(\pi)$
under the projection $\I\lie{k}^*\to\I\lie{m}^*$.
>From Theorem~\ref{thm-wf-afsupp} it follows that
\begin{equation*}
\afsupp_M(\Theta^K_\pi\restrict{M}) \subset A=\Ad^*(M)A.
\end{equation*}
Consider, in $\Dprime(M)$, the Fourier series $\Theta^K_\pi\restrict{M}=\sum_{\mu\in\Mhat}c_\mu\tr\mu$.
By Corollary~\ref{conv-F-series} the map $\mu\mapsto c_\mu$ is polynomially bounded.
We prove that
\begin{equation*}
c_\mu=m_M(\mu:\pi\restrict{M}) \quad\text{for all $\mu\in\Mhat$.}
\end{equation*}
The assertions of the theorem will follow from this.
Moreover, it says that
$\Theta^M_{\pi\restrict{M}} =\Theta^K_\pi\restrict{M}$.

Let $\mu\in\Mhat$. Fix a representation space $H_\mu$.
Let $\rho=\ind_M^K(\mu)$ denote the unitary representation of $K$ induced by $\mu$.
We view the representation space $H_\rho$ of $\rho$ as the subspace of $\Ltwo(K,H_\mu)$
defined by $f(xm)=\mu(m^{-1})f(x)$, $m\in M$, almost every $x\in K$.
Then $f\in\Ltwo(K,H_\mu)$ belongs to $H_\rho$ only if it satisfies, in the sense of distributions, the
first order system of differential equations $\widetilde{Y}f+\mu_*(Y)f=0$, $Y\in\lie{m}$.
Here $\mu_*$ is the Lie algebra representation induced by $\mu$.
The characteristic variety of $\widetilde{Y}+\mu_*(Y)$ is contained in $K\times Y^\perp$.
Hence $\WF(f)\subset K\times\Ad^*(K)\lie{m}^\perp$ if $f\in H_\rho$.
Theorem~\ref{thm-wf-afsupp}, generalized to vector valued distributions, implies that
$\afsupp_K(f)\subset \I\Ad^*(K)\lie{m}^\perp$ for every $f\in H_\rho$.
This implies $\AS_K(\rho)\subset \I\Ad^*(K)\lie{m}^\perp$.
Indeed, if this were not true, we could find a closed cone $S\subset\Weylcc\setminus 0$,
$S\cap\I\Ad^*(K)\lie{m}^\perp=\emptyset$, and
$f=\sum_{\lambda\in\Khat\cap S} f_\lambda\in H_\rho$, $f_\lambda\in W_\lambda$,
such that $\sum_{\lambda\in\Khat\cap S} |\lambda|^{2N} \|f_\lambda\|^2=\infty$ for some $N\in\N$.
Here $W_\lambda$ denotes the $\lambda$-isotypical subspace of $H_\rho$.
Using assumption~\eqref{AS-disjoint-normalbundle} we deduce $\AS_K(\rho)\cap\AS_K(\pi)=\emptyset$.
Therefore $\supp_K(\rho)\cap\supp_K(\pi)$ is relatively compact, hence finite.
By Frobenius reciprocity we get, with sums having only finitely many nonzero summands,
\begin{align*}
m_M(\mu:\pi\restrict{M})
     &= \sum_{\lambda} m_K(\lambda:\pi) m_M(\mu:\lambda\restrict{M}) \\
     &= \sum_{\lambda} m_K(\lambda:\pi) \int_M \overline{\tr\mu(m)} \tr\lambda\restrict{M}(m)\,dm \\
     &= \big(\Theta^K_\pi\restrict{M} \big| \tr\mu\big)_{\Ltwo(M)}.
\end{align*}
The last equation follows from \eqref{K-char-to-M}.

\providecommand{\bysame}{\leavevmode\hbox to3em{\hrulefill}\thinspace}

\end{document}